\setlist{nolistsep,leftmargin=*}
\begin{document}
	
	\mainmatter              
	\title{A Note on Persistence Theory with Applications}
	\titlerunning{The Persistence Theory}  
	%
	\author{Narendra Pant\inst{*}}
	\authorrunning{Narendra Pant  }
	%
	\tocauthor{Narendra Pant}
	\institute{University of Louisiana at Lafayette\\
		Lafayette, LA 70504, USA,\\
		\vspace{0.1in}
		\email{$^*$ Email: narendra.pant1@louisiana.edu}\\ 
	}
	
	\maketitle              
	
	\begin{abstract}
	The persistence theory has been employed by several authors in order to study persistence properties of dynamical systems generated by ordinary differential equations or maps across diverse disciplines. In this note, the author discusses a roadmap to show the persitence of dynamical systems by discussing some problems from literature, which mainly involves either dealing with spectral radius or the Lyapunov exponents. 
		
		\keywords{Persistence Theory, Dynamical Systems.}
	\end{abstract}
	
	\section{Introduction}
	The dynamical systems encompass a broad range of mathematical models studied across diverse disciplines, including mathematics, physics, biology, engineering, economics, and many more. A few references for different disciplines are: see \cite{guckenheimer2013nonlinear, hirsch2013differential, strogatz2018nonlinear} for examples from General Dynamical Systems, see \cite{arnol2013mathematical, ott2002chaos} for Physical Systems, \cite{kot2001elements, murray2007mathematical} for Biological and Ecological Systems, \cite{ogata2020modern, pearson2003selecting} for Engineering and Applied systems, and \cite{nowak2006evolutionary, weidlich2006sociodynamics} for Social and Economic Systems. Whichever is the discipline, the persistence results for the dynamical systems are extremely important.
	
	The persistence results are central to understanding, predicting, and controlling the dynamics of complex systems, especially when extinction or collapse has profound implications. These results bridge the gap between theoretical insights and practical applications across a range of disciplines. The persistence of predator-prey models in Ecology guarantees coexistence \cite{hofbauer1998evolutionary, murray2007mathematical}, whereas in Epidemiology, the persistence of endemic disease models helps public health experts understand when disease will continue to exist and guide strategies for eradication \cite{anderson1991infectious, brauer2012mathematical}. Similarly, the persistence of oscillatory chemical reactions in Chemistry guarantees that oscillatory behavior is maintained, which is critical for understanding natural phenomena like circadian rhythms or enzyme reactions \cite{epstein1998introduction}. The competitive market models, for example, a Cournot competition model, in Economics, the persistence of which is crucial to maintaining diversity in the market and avoiding monopolistic collapse \cite{tirole1988theory}. Further, the persistence of the magnetohydrodynamic (MHD) equations in plasma physics ensures maintaining energy production in fusion reactors \cite{freidberg2008plasma}. Also, the persistence of robotic systems in Engineering guarantees the system's robustness to disturbances or environmental changes \cite{pearson2003selecting}. These are only a few of the examples; the list is never-ending.
	
	The focus of this paper is to make a roadmap by working out a few examples from different disciplines using difference equations, differential equations, and delay differential equations. That is, given any dynamical system, how can we show that the given system is persistent?

	\section{Preliminaries}
	In this section, we summarize the persistence theory for any given dynamical system generated by ordinary differential equations or by maps. The materials are directly inspired from \cite{salceanu2009lyapunov, smith2011dynamical}. For proofs and other details, see \cite{salceanu2009lyapunov, smith2011dynamical}. To this end, we consider following dynamical systems

	\begin{enumerate}[label =\alph*). ]
		\item \textbf{Discrete-time Systems ($t \in \mathbb{Z}_+$)}
		
		Consider the discrete dynamical system 
		\begin{align}
			\label{thoery model 1}
			\begin{split}
				& z(t+1) = F(z(t))  \\
				& z_0 = z(0) \in \mathbb{R}^p_+ \times \mathbb{R}^q_+
			\end{split}
		\end{align}
			To study the persistence of $y$, we put the above system in the following form
		\begin{align} 
			\label{discrete time}
			\begin{split}
				&	x(t+1) = \mathcal{F}(z(t)) \\
				& y(t+1) = \mathcal{A}(z(t)) y(t)
			\end{split}
		\end{align}

		\item \textbf{Continuous-time Systems ($t \in \mathbb{R}$)}
		
		Consider the continuous dynamical system 
		\begin{align}
			\label{thoery model 2}
			\begin{split}
				& z'(t) = F(z(t))  \\
				& z_0 = z(0) \in \mathbb{R}^p_+ \times \mathbb{R}^q_+
			\end{split}
		\end{align}
		To study the persistence of $y$, we put the above system in the following form
			\begin{align} 
			\label{continuous time}
			\begin{split}
				&	x'(t) = \mathcal{F}(z(t)) \\
				& y'(t) = \mathcal{A}(z(t)) y(t)
			\end{split}
		\end{align}
		
	\end{enumerate}
	Some of the common assumptions are: Let $z = (x,y) \in \mathbb{R}^n$, where $x \in \mathbb{R}^p$, $y \in \mathbb{R}^q$, such that $n = p+q$, and $z \mapsto ( \mathcal{F}(z), \mathcal{A}(z) )$ is a continuous map. Here $F: \mathbb{R}^p_+ \times \mathbb{R}^q_+ \rightarrow \mathbb{R}^p_+ \times \mathbb{R}^q_+$ is a continuous map, $\mathcal{F}: \mathbb{R}^p_+ \times \mathbb{R}^q_+ \rightarrow \mathbb{R}^p_+$, $g: \mathbb{R}^p_+ \times \mathbb{R}^q_+ \rightarrow \mathbb{R}^q_+$, such that 
	\[
	F(z) = (\mathcal{F}(z), g(z)) \quad \text{with} \quad g(z) = \mathcal{A}(z)y, \ \forall z \in \mathbb{R}^p_+ \times \mathbb{R}^q_+.
	\]
	Further, $\mathcal{A}(z)$ is a continuous matrix function with $\mathcal{A}(z) \in \mathbb{R}^{q \times q}$ satisfying $0 \le \mathcal{A}(x,0)$. The existence and uniqueness of the solutions for all times are assumed for both discrete and continuous cases. Let $\Phi(t,z) = z(t) = (x(t), y(t))$ be the (state) continuous semiflow generated by the solutions of (\ref{discrete time}) or (\ref{continuous time}). 
	
	Now, we list the main steps to show the persistence of $y$ in the case of discrete-time systems. Since the setting of the working problem is similar, the steps will also be valid for continuous-time systems (of course, the basics of ordinary differential equations will apply for the continuous case).

	\begin{enumerate}[label =\alph*). ]
		\item Rewrite the system (\ref{thoery model 1}) as in (\ref{discrete time}).
		
		\item Define the persistent function $\rho(z)$ as per requirement. For example, persistence of which species do we want to study? Of a species, of at least one of the species, or of all the species? A few examples of persistent functions are $\rho(z) = \sum_{i} |y_i|$, $\rho(z) = y$, etc.
		
		\item Once the persistent function is defined, we define the extinction set as 
		$$
		X_0^y := \{ z(t, z_0) \mid \rho(\Phi(t,z_0)) = 0, \ \forall t \ge 0\} \equiv \{ (x_0, y_0) \mid y_0 = 0\}.
		$$
		
		\item Study the dynamics on $X_0^y$, on which the subsystem is given by
		\begin{align}
			\begin{split}
				& \text{Discrete: } x(t+1) = \mathcal{F}(x(t),0), \\
				& \text{Continuous: } x'(t) =  \mathcal{F}(x(t),0).
			\end{split}
		\end{align}
		Find the $\omega$-limit set of $X_0^y$, that is, $\Omega (X_0^y):= \{ M_1, M_2, \cdots \}$.
		
		\item Write the linearized systems as
		\begin{align}
			\label{linearized system}
			\begin{split}
				& \text{Discrete: } u(t+1) = \mathcal{A}(\Phi(t, z)) u(t), \hspace{1cm} u(0) = y(0), \\
				& \text{Continuous: } v'(t) = \mathcal{A}(\Phi(t, z)) v(t), \hspace{1cm} v(0) = y(0).
			\end{split}
		\end{align}
		Let the $q \times q$ fundamental solution of \eqref{linearized system} be $P(t,z)$. Then, one can write
		\begin{align}
			\begin{split}
				& \text{Discrete: } 	P(t+1,z) =  \mathcal{A}(\Phi(t, z)) P(t,z), \\
				& \hspace{1cm} \equiv P(t,z) = \mathcal{A}(z(t-1)) \mathcal{A}(z(t-2)) \cdots  \mathcal{A}(z(1))  \mathcal{A}(z(0)), \\
				& \text{Continuous: } \frac{d}{dt} P(t,z) = \mathcal{A}(\Phi(t, z)) P(t,z),
			\end{split}
		\end{align}
		with $P(0,z) = I$ in either case.
		
		\item To show that $\Omega (X_0^y)$ is a uniformly weak repeller set, we have the following two cases:
		\begin{enumerate}[label =\roman*). ]
			\item When some of the $M_i \in \Omega (X_0^y)$ are equilibrium points and the remaining are intervals or sets.  
			
			In this case, use Proposition 1 from \cite{salceanu2009lyapunov}. It uses the concept of Lyapunov exponents.
			
			\item When all of the $M_i \in \Omega (X_0^y)$ are equilibrium points.  
			
			In this case, use Corollary 1 from \cite{salceanu2009lyapunov}. It uses the concept of finding the spectral radius of periodic orbits from $\Omega (X_0^y)$.  
			
			Either of these techniques can still be used even when the scenarios are the other way around.
		\end{enumerate}
		
		\item After showing that $\Omega (X_0^y)$ is a uniformly weak repeller set, use Theorem 2.3 from \cite{salceanu2009lyapunov}. By this theorem, there exists $\epsilon > 0$ such that $\liminf_{t \rightarrow \infty } | y(t) | > \epsilon$ for all positive initial conditions $x(0), y(0)$. Hence, by definition, $y$ is uniformly strongly $\rho$-persistent.
		
	\end{enumerate}

	\section{Main Results: Problem Solving }

	\subsection{A discrete-time predator-prey model from Ackleh et al. \cite{ackleh2023discrete}}
		
	Consider a discrete-time predator-prey periodic model in \cite{ackleh2023discrete} by Ackleh et al. (2024). The model equations are listed below as:
		\begin{align}
			\label{periodic problem}
		\begin{split}
			& n(t+1) = \phi(t, n(t))  \hspace{0.1cm}[1-f(p(t))p(t)] \hspace{0.1cm} n(t)\\
			& p(t+1) = s_pp(t) + \kappa  \hspace{0.1cm} \phi (t, n(t))n(t) f(p(t))p(t).
		\end{split}
	\end{align}
	where $n(t)$ and $p(t)$ are the densities of the prey and predator population, respectively, at time $t$. In the absence of the predator, the prey is assumed to grow with the time-dependent nonlinear growth rate $ \phi(n(t), t)$. We assume that  $0<s_n<1$ is the (density-independent) survival probability of prey and $b(n,t)$ is the time and density-dependent prey fecundity.  To represent seasonal breeding, we choose $b(n,t)$ to be a periodic function of period two of the form $b(n,2t)=\hat{b}(n), \; b(n, 2t+1)=0$, $t=0,1,2,\dots$. Thus, the seasonal growth rate is given by $\phi(n,2t) = s_n +\hat b(n)$ and $\phi(n,2t+1) = s_n$, $t=0, 1, 2, \dots$. This form implies that individuals in the prey population reproduce during even time steps (breeding seasons) but not during odd time steps (non-breeding seasons). 
	
	The prey growth rate is modified by the factor $1-f(p)p$, where $0 < f(p) < 1$ is defined to be the probability that an individual prey is consumed by an individual predator when $p$ predators are present. Therefore, $ 0 \le  f(p)p < 1$ gives the probability that an individual prey is consumed when $p$ predators are present, and the term $0\le 1-f(p)p <1$ represents the fraction of prey that manage to escape predation. We assume that predator reproduction is proportional to the amount of prey consumed, where $\kappa>0$ converts consumed prey into new predator individuals. In addition, we assume that the predator has a density-independent survival probability $0 < s_p < 1$.  For more details, we refer the reader to \cite{ackleh2023discrete}.
		
	In order to discuss the periodic solutions of above model, we first time-tansform it using realtioin $\tau +i = 2(t+i), i = 0, 1, 2 \cdots$. Then, the composite system takes the form
	\begin{align}
		\label{Composed Seasonal Breeding with compact symbols}
		\begin{split}
			&n(\tau +1)= AB \left[1-f(x)x \right]  n(\tau),\\
			&p(\tau +1)=  s_px+ \kappa n(\tau) A B f(x)x,
		\end{split}
	\end{align}
	where
	\begin{align}\label{notatioin ABx}
		\begin{split}
			& A = A(n)= s_n^2+s_n\hat{ b}(n), \\
			& B = B(p)=1-f(p)p , \\
			& x = x(n, p)=s_p p+ \kappa \left(s_n+\hat{ b}(n)\right)  n f(p)p \equiv s_pp + \frac{\kappa}{s_n}A(n) nf(p) p. \\
		\end{split}
	\end{align}
		In Theorem \ref{Permanence}, we show that system \eqref{Composed Seasonal Breeding with compact symbols} is persistent when these two growth rates are greater than one. This persistence, combined with solution boundedness obtained in the proof of Lemma 2 in \cite{ackleh2023discrete}, establishes the permanence of the system. To this end, the growth rates of the species related to dynamics of the above model are
			\begin{enumerate}[label =\alph*). ]
					\item The inherent growth rate of prey is $r_0:= \phi(0) = s_n^2 + s_n\hat{ b}(0)$.
			
				\item The invasion growth rate of predator is $r_i:= s_p + \kappa \bar{n} f(0)$ where $\bar{n} = \hat{ b}^{-1} \left( \frac{1-s_n^2}{s_n}\right)$.
				\end{enumerate}
	\begin{theorem} 
		\label{Permanence}
		Assume that $r_0 >1$ and  $r_i>1$. Then system \eqref{Composed Seasonal Breeding with compact symbols} is strongly uniformly persistent, i.e., there exists an $\epsilon >0$ such that $\liminf_{\tau \to \infty} \min\{n(\tau), p(\tau)\} > \epsilon$ for all positive initial conditions $(n(0), p(0))$. 
	\end{theorem}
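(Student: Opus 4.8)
\section*{Proof proposal}

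The plan is to run the seven-step roadmap of Section~2 with the persistence function $\rho(z)=\min\{n,p\}$, so that the extinction set is the whole boundary $X_0=\{(n,p):n=0\}\cup\{(n,p):p=0\}$ of the positive cone, and to produce a uniform lower bound for $\liminf_\tau\rho(\Phi(\tau,z))$. The first task is to identify the boundary dynamics and its $\omega$-limit set. On $\{n=0\}$ one reads off from \eqref{notatioin ABx} that $x(0,p)=s_pp$, so the second equation collapses to $p(\tau+1)=s_px=s_p^2p$; since $0<s_p<1$ this forces $p(\tau)\to 0$, and the only $\omega$-limit point on $\{n=0\}$ is the origin. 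On $\{p=0\}$ one has $x=0$, $B=1$, and the prey obeys the scalar map $n\mapsto A(n)n=(s_n^2+s_n\hat b(n))n$, whose nontrivial fixed point is exactly $\bar n=\hat b^{-1}((1-s_n^2)/s_n)$. Granting that $\bar n$ is globally attracting on $(0,\infty)$ for this map (the instability of $0$ being where $r_0>1$ enters), the boundary $\omega$-limit set is the pair of equilibria $\Omega(X_0)=\{(0,0),(\bar n,0)\}$, which places us in case (vi)(ii) of the roadmap.

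Next I linearize as in \eqref{linearized system} and compute the transverse growth factors. For the prey, the coefficient in $n(\tau+1)=AB[1-f(x)x]\,n$ at the origin is $A(0)B(0)[1-f(0)\cdot 0]=s_n^2+s_n\hat b(0)=r_0>1$, so $(0,0)$ is unstable along the prey axis. For the predator I factor $p$ out of the second equation via $x=C(n,p)\,p$ with $C(n,p)=s_p+\tfrac{\kappa}{s_n}A(n)nf(p)$; the resulting $1\times 1$ growth factor is $\mathcal A_p(n,p)=C(n,p)\bigl(s_p+\kappa nABf(x)\bigr)$, and on $\{p=0\}$ it reduces to $\mathcal A_p(n,0)=\bigl(s_p+\tfrac{\kappa}{s_n}A(n)nf(0)\bigr)\bigl(s_p+\kappa nA(n)f(0)\bigr)$. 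At the origin this equals $s_p^2<1$, whereas at $(\bar n,0)$, where $A(\bar n)=1$, it equals $\bigl(s_p+\tfrac{\kappa}{s_n}\bar nf(0)\bigr)r_i$; writing $\kappa\bar nf(0)=r_i-s_p$ and using $0<s_n<1$ shows the first factor exceeds $r_i$, so $\mathcal A_p(\bar n,0)>r_i^2>1$ precisely because $r_i>1$. Thus $(\bar n,0)$ is a uniform weak repeller in the predator direction.

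With both transverse factors in hand I would assemble the acyclicity argument underlying Corollary~1 of \cite{salceanu2009lyapunov}. The origin is \emph{not} a repeller in the predator direction ($s_p^2<1$), so persistence cannot be read off from $(0,0)$ in isolation; instead I use that $r_0>1$ makes $(0,0)$ unstable along the prey axis, that the unique boundary chain runs $(0,0)\to(\bar n,0)$, and that there is no return chain because $\bar n$ attracts the prey-only map. Hence $\{(0,0),(\bar n,0)\}$ is an acyclic covering of $\Omega(X_0)$, each element isolated and a uniform weak repeller in at least one interior direction, which is exactly the hypothesis of Corollary~1; Theorem~2.3 of \cite{salceanu2009lyapunov} then yields a single $\epsilon>0$ with $\liminf_\tau\min\{n(\tau),p(\tau)\}>\epsilon$ for all positive initial data. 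Equivalently, one may argue hierarchically: first show $\rho=n$ persists (extinction set $\{n=0\}$, repeller $r_0>1$), and then, knowing $n(\tau)$ is eventually bounded away from $0$, discard the origin and apply the single-equilibrium spectral-radius criterion at $(\bar n,0)$ to obtain predator persistence.

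The step I expect to be the real obstacle is the one flagged above as a hypothesis: pinning down $\Omega(X_0)$ rigorously, i.e.\ proving that $\bar n$ is globally asymptotically stable for the scalar prey map $n\mapsto(s_n^2+s_n\hat b(n))n$ on $(0,\infty)$. This is a statement about a one-dimensional difference equation and can fail (periodicity or chaos) unless $\hat b$ is suitably monotone and concave, so it must be imported from the structural assumptions on $\hat b$ together with the boundedness established in Lemma~2 of \cite{ackleh2023discrete}. If the prey-only attractor turns out to be larger than the single point $\bar n$, then $\Omega(X_0)$ contains a nontrivial invariant set, case (vi)(i) applies, and I would instead invoke Proposition~1 of \cite{salceanu2009lyapunov}, replacing the spectral-radius comparison by the requirement that the predator's Lyapunov exponent (the time-average of $\log\mathcal A_p$) along that attractor be positive---again the content of the assumption $r_i>1$.
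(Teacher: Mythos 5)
Your proposal is essentially correct, and your closing ``hierarchical'' variant is precisely the paper's argument: the paper applies Corollary 1 and Theorem 2.3 of \cite{salceanu2009lyapunov} twice, first on the face $\{n=0\}$ (where $p(\tau+1)=s_p^2p(\tau)\to 0$, so $\Omega(M)=\{(0,0)\}$ and the transverse prey factor is $A(0)=r_0>1$), and then on the face $\{p=0\}$ with the compact set $M=\{(n,0):\tfrac{\epsilon}{2}\le n\le D\}$ deliberately chosen to exclude the origin by using the prey persistence already established, so that $\Omega(M)=\{(\bar n,0)\}$ and only the single equilibrium $(\bar n,0)$ must be a repeller; finally $\epsilon=\min\{\widehat\epsilon,\widetilde\epsilon\}$. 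Your primary route --- one extinction set $\{n=0\}\cup\{p=0\}$ with the acyclic chain $(0,0)\to(\bar n,0)$ --- is a legitimate alternative framing (and correctly diagnoses that $(0,0)$ is not a repeller in the predator direction, since its predator multiplier is $s_p^2<1$), but it does not literally match the hypotheses of the cited Corollary 1, which the paper invokes face by face; the hierarchical version is the safer one to write down. Two further remarks. First, your transverse computation at $(\bar n,0)$ is more careful than the paper's: using $A(\bar n)=1$, the composite predator multiplier is $\bigl(s_p+\tfrac{\kappa}{s_n}\bar n f(0)\bigr)r_i>r_i^2>1$, whereas the paper asserts $\mathcal{A}(E_1)=r_i$; both exceed $1$ when $r_i>1$, but your factorization $x=C(n,p)p$ is the correct bookkeeping. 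Second, the step you flag as the real obstacle --- global convergence of the prey-only composite map $n\mapsto(s_n^2+s_n\hat b(n))n$ to $\bar n$ on $(0,\infty)$ --- is exactly what the paper imports from Lemma 1 of \cite{ackleh2023discrete} rather than proving; your contingency via Lyapunov exponents over a larger prey attractor is sound but unnecessary once that lemma is granted.
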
 
	\begin{proof}
		We first rewrite the system (\ref{Composed Seasonal Breeding with compact symbols}) in the form given in \cite{salceanu2009lyapunov} as follows: 
		\begin{align*}
			\begin{split}
				&n(\tau +1)=\mathcal{A}(n(\tau ), p(\tau ))n(\tau ),\\
				&p(\tau +1)=\mathcal{F}(n(\tau ), p(\tau )),
			\end{split}
		\end{align*}\\
		where $\mathcal{A}(n(\tau), p(\tau )) := AB [1-f(x)x]$ and $\mathcal{F}(n(\tau ), p(\tau )) := s_px+ \kappa n A B f(x)x$. Consider the boundary dynamics where $n=0$, which is given by $p(\tau +1) = s_p^2 p(\tau ) $.  Since $0<s_p <1$, it follows that  $\lim_{{\tau \to \infty}} p(\tau) =0$. Note that $(0,0)$ is unstable when $r_0>1$. Define
		\begin{align*}
			&Z:= \{(n,p) \in  \mathbb{R}^2 | n \ge 0, p \ge 0 \},\\
			&X_1:=\{ (n,p) \in  Z | n=0 \},
		\end{align*}
		and $M\subseteq Z \cap X_1$. Take $M:= \{(n,p)\in  \mathbb{R}_+^2 | n=0, 0 \le p \le \widetilde D\}$	where $\widetilde D$ is the bound from Lemma 2 in \cite{ackleh2023discrete}.  Then $M$ is non-empty, compact, and positively invariant. Since $\lim_{{\tau \to \infty}} p(\tau) =0$, the omega limit set of $M$ is 
		$$
		\Omega (M)= \cup _{z \in M} \omega (z) = \{(0,0)\}.
		$$
		Now let $\mathcal{P}:= \{E_0\}=\{(0,0)\}.$ Then $\mathcal{A}(E_0)=A(0) \neq 0$, and hence $\mathcal{A}(E_0)$ is primitive with $r(\mathcal{P})=r_0 >1$. Take the unit vector $\eta =1$, and $(n,p) \in M,$ then $\mathcal{A}(n,p)\eta \ne 0, \forall (n,p) \in M$. So, all the three conditions of Corollary 1 of \cite{salceanu2009lyapunov} are satisfied. Hence, by Corollary 1 and Proposition 1 in \cite{salceanu2009lyapunov}, $M$ is a uniformly weak repeller set relative to the dynamics on $Z \backslash X_1$. Finally by Theorem 2.3 in \cite{salceanu2009lyapunov}, there exists an $ \widehat \epsilon >0$ such that $\liminf_{\tau \to \infty} |n(\tau )| > \widehat  \epsilon$ for all $(n(0), p(0)) \in Z \backslash X_1.$ Thus, by definition, $n$ is strongly uniformly persistent.
		
		Now, we again rewrite the system (\ref{Composed Seasonal Breeding with compact symbols}) as follows: 
		\begin{align*}
			\begin{split}
				&n(\tau +1)= \mathcal{F}(n(\tau ), p(\tau )),\\
				&p(\tau +1)=\mathcal{A}(n(\tau ), p(\tau ))p(\tau ),
			\end{split}
		\end{align*}
		where $\mathcal{F}(n(\tau ), p(\tau )):= AB [1-f(x)x] n(\tau )$ and 
		$$\mathcal{A}(n(\tau ), p(\tau )):= s_p \left( s_p+ \frac{\kappa}{s_n} A(n(\tau ))n(\tau ) f(p(\tau )) \right) + \kappa  A(n(\tau ))n(\tau ) B(p(\tau )) f(x) \left( s_p+ \frac{\kappa}{s_n} A(n(\tau ))n(\tau ) f(p(\tau )) \right). $$		
		Consider the boundary dynamics where $p=0$, which is given by $ n(\tau +1) = \left( s_n^2+s_n\hat{ b}(n(\tau )) \right)n(\tau )$. Note that from Lemma 1 in \cite{ackleh2023discrete},  $\lim_{{\tau \to \infty}} n(\tau) = \bar{n}$, and that  $(\bar{n},0)$ is unstable when $r_i> 1$. Define
		\begin{align*}
			\begin{split}
				&Z:= \{(n,p) \in  \mathbb{R}^2 | n \ge 0, p \ge 0 \},\\
				& X_1:=\{(n,p) \in  Z |p=0\},
			\end{split}
		\end{align*}
		and $M\subseteq (Z \cap X_1)$. Take $M:= \{(n,p)\in  \mathbb{R}_+^2 | \frac{\epsilon} {2}  \le n\le D, p=0 \}$	where $D$ is the bound from Lemma 2 in \cite{ackleh2023discrete}. Then $M$ is non-empty, compact, and positively invariant. Since $\lim_{{\tau \to \infty}} n(\tau) =\bar{n}$, the omega limit set of $M$ is 
		$$
		\Omega (M)= \cup _{z \in M} \omega (z) = \{(\bar{n},0)\}.
		$$
		Now let $\mathcal{P}:= \{E_1\}=\{(\bar{n},0) \}$.  Then $\mathcal{A}(E_1)= r_i \neq 0$, and hence $\mathcal{A}(E_1)$ is primitive with $r(\mathcal{P})=r_i>1$. Take the unit vector $\eta =1,$ and $(n,p) \in M,$ then $\mathcal{A}(n,p)\eta \ne 0, \forall (n,p) \in M$.	So, by using a similar argument as above, we can show that  there exists $\widetilde\epsilon>0$ such that $\liminf_{\tau \to \infty} |p(\tau )| > \widetilde\epsilon$ for all $(n(0), p(0)) \in Z \backslash X_1.$ Thus, by definition, $p$ is also strongly uniformly persistent.
		
		Finally, let $\epsilon = \min\{\widehat \epsilon, \widetilde \epsilon \} $, then  the result follows and in particular, $\liminf_{\tau \to \infty} \min\{n(\tau), p(\tau)\} > \epsilon$ for all positive initial conditions $(n(0), p(0))$. 
	\end{proof}

	\subsection{ A discrete-time predator-prey model from Q. Din \cite{din2017complexity}}
	Consider a discrete-time predator-prey model in \cite{din2017complexity}. The model equations are listed below as:
		\begin{align}
		\begin{split}
			\label{Predator-prey model}
			& x(t+1) =  x(t) e^{\left[r\left(1-\frac{x(t)}{K}\right)-\frac{\beta y(t)}{x(t)+\gamma}\right]} \\
			& y(t+1) = y(t)e^{\left[1- d- \frac{a y(t)}{bx(t)+c}\right]}
		\end{split}
	\end{align}
	where $x(t)$ and $y(t)$ are prey and predator populations at time $t$ respectively. Furthermore, the parameter $r$ denotes the intrinsic growth rate of prey, $\beta$ represents the maximum value of the per capita reduction rate of prey, $K$ denotes environmental carrying capacity of the prey in a particular habitat, $\gamma$ measures the extent to which the environment provides protection to prey, $c$ measures the extent to which the environment provides protection to predator, and $a$ represents the maximum value of the per capita reduction rate of predator. Assume that $b$ measure the food quality that the prey provides for conversion into predator births, and $d$ denotes the death rate of predator. For more details, we refer reader to \cite{din2017complexity}
	
	We discuss the persistence of system \eqref{Predator-prey model} in the Theorem 2. To this end, first we have the following Lemma 1, where we discuss the persistence of the prey population. In Lemma 2, the persistence of the predator population will be discussed.
	\begin{lemma}
		\label{persistence of prey}
		Assume $r \gamma a > \beta c(1-d)$. Then, the prey population is uniformly strongly persistent, that is, there exists $\epsilon_1 >0$ such that $\liminf _{t \rightarrow \infty } | x(t) | > \epsilon_1$ for all positive initial conditions $x(0), y(0)$.
	\end{lemma}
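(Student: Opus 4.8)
The plan is to follow the roadmap of Section~2, treating the prey density $x$ as the persistent variable and casting the prey equation in the multiplicative form
\begin{align*}
 x(t+1) = \mathcal{A}(x(t),y(t))\,x(t), \qquad
 \mathcal{A}(x,y) := e^{\,r\left(1-\frac{x}{K}\right)-\frac{\beta y}{x+\gamma}}.
\end{align*}
Here $\mathcal{A}$ is a scalar (the persistent block has dimension $q=1$), so its spectral radius at any point is just its value there, and the primitivity requirement of Corollary~1 in \cite{din2017complexity}\ -- rather, of Corollary~1 in \cite{salceanu2009lyapunov} -- is automatic. I would then set $Z:=\{(x,y):x\ge 0,\ y\ge 0\}$ and the extinction set $X_1:=\{(x,y)\in Z:x=0\}$, exactly as in the proof of Theorem~\ref{Permanence}.

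First I would analyze the boundary dynamics. On $X_1$ the prey coordinate stays at $0$ and the predator obeys the Ricker-type recursion
\begin{align*}
 y(t+1)=y(t)\,e^{\,1-d-\frac{a\,y(t)}{c}},
\end{align*}
whose fixed points are $y=0$ and $y^{*}:=\frac{c(1-d)}{a}$, the latter positive under the standing assumption $0<d<1$ on the predator death rate. Writing the exponent as $(1-d)\bigl(1-y/y^{*}\bigr)$ exhibits the map as a Ricker map with growth parameter $1-d\in(0,2)$, for which $y^{*}$ is globally asymptotically stable on $(0,\infty)$. Consequently, for a compact positively invariant $M\subset X_1$ bounded by the estimate from the relevant boundedness lemma, the omega-limit set reduces to the two equilibria $E_0:=(0,0)$ and $E_1:=(0,y^{*})$.

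Next I would evaluate the prey growth rate $\mathcal{A}$ on $\Omega(M)$. At the origin $\mathcal{A}(E_0)=e^{r}>1$ since $r>0$, while at the predator-only equilibrium
\begin{align*}
 \mathcal{A}(E_1)=e^{\,r-\frac{\beta y^{*}}{\gamma}}=e^{\,r-\frac{\beta c(1-d)}{a\gamma}},
\end{align*}
which exceeds $1$ precisely when $r\gamma a>\beta c(1-d)$ -- exactly the hypothesis of the lemma. Thus at each equilibrium of $\Omega(M)$ the scalar linearization in the $x$-direction has spectral radius strictly greater than one; taking the unit vector $\eta=1$ gives $\mathcal{A}(x,y)\eta\neq 0$ on $M$. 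All three conditions of Corollary~1 in \cite{salceanu2009lyapunov} then hold, so $\Omega(M)$ is a uniformly weak repeller relative to the dynamics on $Z\setminus X_1$, and Theorem~2.3 of \cite{salceanu2009lyapunov} yields an $\epsilon_1>0$ with $\liminf_{t\to\infty}|x(t)|>\epsilon_1$ for all positive initial conditions, which is the assertion.

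I expect the main obstacle to be the step establishing global attractivity of $y^{*}$ on the boundary: one must rule out nontrivial attractors (periodic orbits or chaotic sets) of the boundary Ricker map so that $\Omega(M)$ is genuinely the finite set $\{E_0,E_1\}$ and the scalar spectral-radius test of Corollary~1 applies rather than the Lyapunov-exponent criterion of Proposition~1. This is precisely where the restriction $1-d<2$ is used; once the boundary flow is pinned to these two equilibria, the remaining verifications are direct evaluations.
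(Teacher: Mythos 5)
Your argument is correct in outline, but it reaches the conclusion by a genuinely different route than the paper. The paper does \emph{not} assert global asymptotic stability of the boundary fixed point $\bar y=(1-d)c/a$ of the Ricker-type map $f_2(y)=ye^{1-d-ay/c}$; instead it only traps the boundary omega-limit set inside the compact interval $\bigl[f_2^2(c/a),\,f_2(c/a)\bigr]$ and then applies the Lyapunov-exponent criterion (Proposition~1 of \cite{salceanu2009lyapunov}), computing the exponent along an \emph{arbitrary} boundary orbit by telescoping the recursion to show that the time average of $y(t)$ equals $(1-d)c/a$, whence $\lambda=r-\tfrac{\beta c}{\gamma a}(1-d)>0$ under the hypothesis. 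You instead invoke the classical global stability of the Ricker map with growth parameter $1-d\in(0,2)$, which collapses the boundary limit set to the two equilibria $(0,0)$ and $(0,y^*)$, and then apply the spectral-radius test (Corollary~1 of \cite{salceanu2009lyapunov}) by simply evaluating the scalar $\mathcal{A}$ at those two points; the resulting threshold $r\gamma a>\beta c(1-d)$ coincides with the paper's because the orbit average of $y$ equals the fixed-point value in either case. Your route is shorter and cleaner, but it imports an external theorem (global stability of the Ricker fixed point for parameter at most $2$, plus the standing assumption $0<d<1$ so that $y^*>0$), and you correctly identify this as the load-bearing step; the paper's averaging argument is more robust in that it would survive even if the boundary dynamics were periodic or chaotic, which is exactly the ``interval-valued $M_i$'' scenario its Section~2 roadmap is designed for. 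If you want to keep your version self-contained, you should either cite a specific global-stability result for the Ricker map or reproduce the paper's interval-trapping estimate as a fallback.
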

	\begin{proof}
			First, we rewrite the system (\ref{Predator-prey model}) as
		\begin{align*}
			\begin{split}
				& x(t+1) = A(x(t), y(t))x(t), \\
				& y(t+1) = f(x(t), y(t)), \hspace{0.5cm} \text{where } A(x,y) = e^{\left[r\left(1-\frac{x}{K}\right)-\frac{\beta y}{x+\gamma}\right]}, \quad 
				f(x,y) = y e^{\left[1- d- \frac{a y}{bx+c}\right]}. \\
			\end{split}
		\end{align*}
		The extinction set is
		\begin{align*}
			\begin{split}
				X_0 &= \{ z(t,z_0) \in Z \mid x(t)=0, \ \forall t \ge 0\}  \equiv \{ (x_0, y_0) \in \mathbb{R}^2_+ \mid x_0=0\}.
			\end{split}
		\end{align*}
		In what follows, we study the dynamics on $X_0$, which is given by 
		\[
		y(t+1) = y(t) e^{1-d-\frac{a y(t)}{c}}.
		\]
		Clearly, the fixed points are $y=0$ and $\bar{y} = (1-d)\frac{c}{a}$. Observe that if we let $f_2(y) = y e^{1-d-\frac{a y}{c}}$, then
		\[
		f_2'(y)= e^{1-d-\frac{a y}{c}}\left[1-\frac{a y}{c}\right].
		\]
		Further, for $y > c/a$, $f_2'(y)<0$, and for $0<y<c/a$, $f_2'(y)>0$. Therefore, the function $f_2(y)$ is increasing on $\left[0, \frac{c}{a}\right]$ and decreasing on $\left[\frac{c}{a}, \infty\right)$. Let $\bar{y} = (1-d)\frac{c}{a}$, and consider the following cases:
		\begin{enumerate}[label=\alph*). ]
			\item When $y_0 = 0$, then $y(1)=0$.
			
			\item When $0 < y_0  < \bar{y}$, then $0 < f_2(y_0)< f_2(\bar{y}) \implies 0< y(1)< \bar{y}$.
			
			\item When $\bar{y} < y_0  < \frac{c}{a}$, then $f_2(\bar{y}) < f_2(y_0)< f_2\left(\frac{c}{a}\right) \implies \bar{y}< y(1)< f_2\left(\frac{c}{a}\right)$.
			
			Note from a), b), c) that the maximum value of iterations of $f_2$ is $f_2\left(\frac{c}{a}\right)$.
			
			\item  For any $y_0 \in \left(\frac{c}{a}, \infty\right)$, the next iteration goes to the left of $y_0$ since $f_2$ is decreasing in this region. When $y_0 > \frac{c}{a}$, $f_2(y_0)<f_2\left(\frac{c}{a}\right) \implies y(1) < f_2\left(\frac{c}{a}\right)$. Note that here $y(1)$ lies to the right of $\frac{c}{a}$. Again, since the function is decreasing in this region, the minimum it can decrease to is $f_2\left(f_2\left(\frac{c}{a}\right)\right) = f_2^2\left(\frac{c}{a}\right)$. 
		\end{enumerate}
		Therefore, one can write
		\[
		f_2^2\left(\tfrac{c}{a}\right) < (1-d)\tfrac{c}{a}<\tfrac{c}{a}<f_2\left(\tfrac{c}{a}\right).
		\]
		So, from the above discussion, $M_1 = \{(0,0)\}$ and $M_2 = \{ (0,M)\}$ where $M =  \left[ f_2^2\left(\tfrac{c}{a}\right), f_2\left(\tfrac{c}{a}\right) \right]$. Therefore,
		\[
		\Omega (X_0) = \{ M_1, M_2\}.
		\]
		
		Now, consider the linearized system 
		\[
		u(t+1)= A(x(t), y(t)) u(t), \quad A(x,y) = e^{ \left[r\left(1-\frac{x}{K}\right) - \frac{\beta y}{x+\gamma}\right]}.
		\]
		The Lyapunov exponent is given by
		\begin{align}
			\begin{split}
				\label{Lyapunov exponent1}
				& \lambda (z, \eta) = \limsup_{t \rightarrow \infty } \frac{1}{t} \ln |P(t,z)\eta|, \\
				& \lambda (z) =  \limsup_{t \rightarrow \infty } \frac{1}{t} \ln |P(t,z)|, \quad \text{for unit vector } \eta = 1.
			\end{split}
		\end{align}
		We find $P(t,z)$ as follows:
		\begin{align*}
			\begin{split}
				P(t,z) &= A(x(t-1), y(t-1)) \cdot A(x(t-2), y(t-2)) \cdot \dots  \cdot A(x(1), y(1)) \cdot A(x_0, y_0) \\
				&= \exp\left[r \left(1- \frac{x(t-1,z)}{K}\right) - \frac{\beta y(t-1,z)}{x(t-1,z)+\gamma}\right] \\
				& \quad \cdot \exp\left[r \left(1- \frac{x(t-2,z)}{K}\right) - \frac{\beta y(t-2,z)}{x(t-2,z)+\gamma}\right] \dots \\
				& \quad \cdot \exp\left[r \left(1- \frac{x(0,z)}{K}\right) - \frac{\beta y(0,z)}{x(0,z)+\gamma}\right] \\
				&\\
				&= \exp \left[rt -\frac{S_1(t,z)}{K} - \beta S_2(t,z)\right].
			\end{split}
		\end{align*}
		Taking natural logs on both sides,
		\begin{align}
			\label{log of P}
			\ln P(t,z) = rt -\frac{S_1(t,z)}{K} - \beta S_2(t,z),
		\end{align}
		where 
		\begin{align*}
			\begin{split}
				&S_1(t,z) = x(t-1,z)+ x(t-2,z)+ \dots + x(0,z), \\
				&S_2(t,z) = \frac{y(t-1,z)}{x(t-1,z)+\gamma} + \frac{y(t-2,z)}{x(t-2,z)+\gamma} + \dots + \frac{y(0,z)}{x(0,z)+\gamma}.
			\end{split}
		\end{align*}
		Consider the following cases:
		\begin{enumerate}[label=\alph*). ]
			\item Take $z_0= (x_0,y_0) = (0,0)$. Clearly, $S_1(t, z_0) = 0$ as $x_0=0$, and $S_2(t, z_0) = 0$ as $y_0 =0$. Therefore, equation (\ref{log of P}) takes the form $\ln |P(t, z_0)| = rt$. Finally, from equation (\ref{Lyapunov exponent1}), $\lambda (z_0) =  \limsup_{t \rightarrow \infty } \tfrac{1}{t} (rt) = r >0$.

			\item Take $z_0 = (x_0,y_0)$ where $x_0=0$ and $y_0 \in M$. As above, $S_1(t,z_0) =0$, and the expression for $S_2(t,z_0)$ takes the form 
			\[
			\gamma S_2(t,z_0) = y(t-1,z_0) + y(t-2, z_0) + \dots + y(0,z_0).
			\]
			To compute $S_2(t,z_0)$, consider
			\begin{align*}
				\begin{split}
					y(t) &= y(0) \exp\left[t- dt -\tfrac{a}{c} \{ y(t-1) + y(t-2)+ \dots + y(0)\}\right].
				\end{split}
			\end{align*}
			From this, one can get
			\begin{align*}
				\ln \left|\tfrac{y(t)}{y(0)}\right| &=  t- dt -\tfrac{a}{c} \{ y(t-1) + \dots + y(0)\} \implies 
				y(t-1) + \dots + y(0) = \tfrac{c}{a} \left[ t- dt - \ln  \left|\tfrac{y(t)}{y(0)}\right|  \right].
			\end{align*}
			At $z=z_0$, 
			\begin{align*}
				S_2(t,z_0) &= \tfrac{c}{\gamma a} \left[ t- dt - \ln  \left|\tfrac{y(t)}{y(0)}\right|  \right].
			\end{align*}
			Using the values of $S_1(t,z_0)$ and $S_2(t,z_0)$ in equation (\ref{log of P}), we get 
			\begin{align*}
				\ln |P(t,z_0)| &= rt-\tfrac{\beta c}{\gamma a} \left[t- dt -  \ln  \left|\tfrac{y(t)}{y(0)}\right|  \right], \\
				\lambda (z_0) &=  \limsup_{t \rightarrow \infty } \tfrac{1}{t} \left[ rt-\tfrac{\beta c}{\gamma a} \left(t- dt -  \ln  \left|\tfrac{y(t)}{y(0)}\right|  \right) \right] \\
				&= r - \tfrac{\beta c}{\gamma a} (1-d) - \limsup_{t \rightarrow \infty} \tfrac{1}{t} \ln \left|\tfrac{y(t)}{y(0)}\right| \\
				&= r - \tfrac{\beta c}{\gamma a} (1-d) \quad \text{(since $y(t)$ is bounded)} \\
				&> 0 \quad \text{if } r  >  \tfrac{\beta c}{\gamma a} (1-d) \equiv  r \gamma a > \beta c(1-d).
			\end{align*}
		\end{enumerate}
		
		Since all the Lyapunov exponents are positive, by Proposition 1 of \cite{smith2011dynamical}, all the $M_i$ in $\Omega (X_0)$ are uniformly weak repellers. By Theorem 2.3 of the same paper \cite{smith2011dynamical}, we may choose $\epsilon_1>0$ such that $\liminf_{t \rightarrow \infty } |x(t)| > \epsilon_1$, for all $y_0 \in Z\backslash X_0$. By definition, $y$ is uniformly strongly persistent. This completes the proof of the theorem.
	\end{proof}
	
Next, in the following lemma, we discuss the persistence of the predator population.

	\begin{lemma}
	\label{persistence of predator}
	Suppose $ b < 1$. Then, the predator is uniformly strongly persistent, that is, there exists $\epsilon_2 >0$ such that $\liminf _{t \rightarrow \infty } | y(t) | > \epsilon_2$ for all positive initial conditions $x(0), y(0)$.
	\end{lemma}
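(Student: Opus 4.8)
The plan is to mirror the strategy of Lemma~\ref{persistence of prey}, this time taking the predator density $y$ as the variable whose persistence we want to establish. First I would recast the system \eqref{Predator-prey model} in the form
\begin{align*}
& x(t+1) = f(x(t),y(t)), \\
& y(t+1) = \mathcal{A}(x(t),y(t))\, y(t),
\end{align*}
with $\mathcal{A}(x,y) = e^{\left[1-d-\frac{a y}{bx+c}\right]}$ and $f(x,y)= x\, e^{\left[r\left(1-\frac{x}{K}\right)-\frac{\beta y}{x+\gamma}\right]}$. The associated extinction set is
\[
X_0 = \{(x_0,y_0)\in\mathbb{R}^2_+ \mid y_0=0\},
\]
on which the surviving subsystem reduces to the Ricker prey map $x(t+1)=x(t)\,e^{r\left(1-x(t)/K\right)}$. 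I would then identify $\Omega(X_0)$ by analyzing this one--dimensional boundary map, exactly as the map $f_2$ was analyzed in Lemma~\ref{persistence of prey}; note that $x=0$ is a repeller of this map (its derivative there is $e^{r}>1$), so for positive prey data the boundary attractor stays bounded away from extinction.

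The observation that makes the predator case cleaner than the prey case is that the transverse linearization coefficient, evaluated on the boundary, does \emph{not} depend on $x$. Indeed, the linearized predator equation along a boundary orbit is
\[
v(t+1)=\mathcal{A}(x(t),0)\,v(t),\qquad \mathcal{A}(x,0)=e^{\,1-d},
\]
so the fundamental solution is simply $P(t,z_0)=e^{\,(1-d)t}$ for every $z_0\in\Omega(X_0)$. Consequently the Lyapunov exponent
\[
\lambda(z_0)=\limsup_{t\to\infty}\tfrac{1}{t}\ln|P(t,z_0)| = 1-d
\]
is the same constant on every component $M_i$ of $\Omega(X_0)$, regardless of whether the prey attractor is the fixed point $K$, a periodic orbit, or a more intricate invariant set. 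The delicate bookkeeping with the sums $S_1,S_2$ and the boundedness argument that was needed in Lemma~\ref{persistence of prey} therefore collapses here to a trivial geometric sum.

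With $\lambda(z_0)=1-d$ positive on all of $\Omega(X_0)$, Proposition~1 of \cite{smith2011dynamical} shows each $M_i$ is a uniform weak repeller transverse to $X_0$, and Theorem~2.3 of \cite{smith2011dynamical} then yields $\epsilon_2>0$ with $\liminf_{t\to\infty}|y(t)|>\epsilon_2$ for all positive initial data, i.e.\ uniform strong persistence of the predator. The main obstacle I anticipate is not the repeller estimate but pinning down $\Omega(X_0)$ when $r$ is large enough for the Ricker map to become periodic or chaotic; fortunately the $x$--independence of $\mathcal{A}(\cdot,0)$ renders this obstacle cosmetic, since the sign of $\lambda$ is unaffected by the detailed structure of the prey attractor. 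I would also flag that the positivity of this exponent is controlled entirely by the sign of $1-d$, so the Lyapunov--exponent method produces $1-d>0$ as the effective persistence threshold; the standing hypothesis then serves to place us in a regime where the solutions are bounded (hence $\Omega(X_0)$ is compact) so that the cited theorems apply.
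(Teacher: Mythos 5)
Your proposal is correct and follows essentially the same route as the paper's proof: same decomposition with $\mathcal{A}(x,y)=e^{1-d-\frac{ay}{bx+c}}$, same extinction set and Ricker boundary dynamics, and the same Lyapunov exponent $1-d$ on every component of $\Omega(X_0)$ (your observation that $\mathcal{A}(x,0)=e^{1-d}$ is $x$-independent is exactly why the paper gets the identical exponent on both $M_1$ and $M_2$), followed by the same appeal to Proposition~1 and Theorem~2.3. You are also right that the operative hypothesis is $d<1$ rather than the stated $b<1$, which is what the paper's own computation (and its Theorem~\ref{theorem Q din}) actually uses.
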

	\begin{proof}
	The extinction set 
	\begin{align*}
		\begin{split}
			X_1 &= \{ z(t,z_0) \in Z \mid y(t)=0, \ \forall t \ge 0\}, \hspace{0.5cm} \text{where } z(t) = (x(t), y(t)) \\
			& \equiv \{ (x_0, y_0) \in \mathbb{R}^2_+ \mid y_0=0\}.
		\end{split}
	\end{align*}
	Dynamics on the extinction set is given by
	\[
	x(t+1) = x(t) e^{r\left(1-\frac{x(t)}{K}\right)}.
	\]
	Clearly, the fixed points are $\bar{x}=0$ and $\bar{x}=K$. So, with a similar argument as for the prey, we get $M_1 = \{(0,0)\}$ and $M_2 = \{ (M,0)\}$ where 
	\[
	M =  \left[ f_1^2\!\left(\tfrac{K}{r}\right), f_1\!\left(\tfrac{K}{r}\right) \right].
	\]
	Therefore,
	\[
	\Omega (X_1) = \{ M_1, M_2\}.
	\]
	
	Now, consider the linearized system 
	\[
	u(t+1)= A(x(t), y(t)) u(t), \hspace{0.5cm} \text{where } A(x,y) = e^{r\left(1-d - \frac{a y}{b x+ c}\right)}.
	\]
	The Lyapunov exponent is given by
	\begin{align*}
		\begin{split}
			\lambda (z, \eta) &= \limsup_{t \rightarrow \infty } \frac{1}{t} \ln |P(t,z)\eta|, \\
			\lambda (z) &=  \limsup_{t \rightarrow \infty } \frac{1}{t} \ln |P(t,z)|, \quad \text{for unit vector } \eta = 1.
		\end{split}
	\end{align*}
	For $z_0 \in M_1$, we have 
	\begin{align*}
		\begin{split}
			\lambda (z_0) &= \limsup_{t \rightarrow \infty } \frac{1}{t} \ln |P(t,z)| \\
			&= \limsup_{t \rightarrow \infty } \frac{1}{t} \ln \big|e^{t(1-d)}\big| \\
			&= \limsup_{t \rightarrow \infty } \frac{1}{t} \, t(1-d) \\
			&= (1-d) > 0 \quad \implies d<1.
		\end{split}
	\end{align*}
	For $z_0 \in M_2$, we have 
	\begin{align*}
		\begin{split}
			\lambda (z_0) &= \limsup_{t \rightarrow \infty } \frac{1}{t} \ln |P(t,z)| \\
			&= \limsup_{t \rightarrow \infty } \frac{1}{t} \ln \big|e^{t(1-d)}\big| \\
			&= \limsup_{t \rightarrow \infty } \frac{1}{t} \, t(1-d) \\
			&= (1-d) > 0 \quad \implies d<1.
		\end{split}
	\end{align*}
	Since all the Lyapunov exponents are positive, by Proposition 1 of \cite{smith2011dynamical}, all the $M_i$ in $\Omega (X_1)$ are uniformly weak repellers. By Theorem 2.3 of \cite{smith2011dynamical}, we may choose $\epsilon_2>0$ such that 
	\[
	\liminf_{t \rightarrow \infty } |y(t)| > \epsilon_2, \quad \forall y_0 \in Z\backslash X_1.
	\]
	By definition, $x$ is uniformly strongly persistent.
	\end{proof}
	
	Combining the Lemmas  \ref{persistence of prey} and \ref{persistence of predator}, we have the following theorem:
	
	\begin{theorem}
		\label{theorem Q din}
	Suppose $d<1$ and $r\gamma a>\beta c(1-d)$. Then the system is uniformly persistent if there exist a $\epsilon= \min \{\epsilon_1, \epsilon_2\}$ such that $\liminf_{t \to \infty} \min \{x(t),y(t)\}>\epsilon$, $\forall$ $x_0, y_0>0$.
	\end{theorem}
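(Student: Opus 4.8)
The plan is to obtain the result directly as a corollary of the two preceding lemmas, since each isolates the persistence of a single population and the standing hypotheses $d<1$ and $r\gamma a > \beta c(1-d)$ are exactly what those lemmas consume. First I would invoke Lemma~\ref{persistence of prey}, whose hypothesis $r\gamma a > \beta c(1-d)$ is assumed here, to secure a constant $\epsilon_1 > 0$ with $\liminf_{t\to\infty} x(t) > \epsilon_1$ for every positive initial condition $(x_0,y_0)$. Next I would invoke Lemma~\ref{persistence of predator}; note that although its statement reads $b<1$, the Lyapunov-exponent computation in its proof only requires $(1-d)>0$, i.e.\ $d<1$, which is precisely the standing hypothesis of the present theorem, so it yields a constant $\epsilon_2 > 0$ with $\liminf_{t\to\infty} y(t) > \epsilon_2$ for every positive initial condition. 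Since $x(t),y(t)\ge 0$ along orbits, the absolute values appearing in the lemmas may be dropped.

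With both bounds in hand, I would set $\epsilon := \min\{\epsilon_1,\epsilon_2\}$ and combine them using the elementary fact that for any two real sequences one has
\[
\liminf_{t\to\infty} \min\{x(t), y(t)\} = \min\Big\{\liminf_{t\to\infty} x(t), \; \liminf_{t\to\infty} y(t)\Big\}.
\]
Only the $\ge$ direction is actually needed, and it follows by fixing $\delta>0$, observing that for all large $t$ both $x(t)$ and $y(t)$ exceed their respective liminfs minus $\delta$, so their minimum exceeds $\min\{\liminf x, \liminf y\}-\delta$, and then letting $\delta\downarrow 0$. Applying this,
\[
\liminf_{t\to\infty} \min\{x(t), y(t)\} = \min\Big\{\liminf_{t\to\infty} x(t), \; \liminf_{t\to\infty} y(t)\Big\} > \min\{\epsilon_1,\epsilon_2\} = \epsilon,
\]
where the strict inequality holds because $\liminf x > \epsilon_1 \ge \epsilon$ and $\liminf y > \epsilon_2 \ge \epsilon$ force both liminfs to strictly exceed $\epsilon$. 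This establishes the claimed uniform (strong) persistence of the full system.

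I do not anticipate a genuine mathematical obstacle, as the theorem is a bookkeeping consequence of the two lemmas. The only points that demand care are the correct handling of the \emph{strict} inequality when passing to the minimum (a weak inequality is immediate, but the strict one needs the observation above that each liminf individually exceeds $\epsilon$), and flagging the hypothesis mismatch in Lemma~\ref{persistence of predator} so that the condition $d<1$ invoked here is properly justified rather than the stated $b<1$. A final loose end worth confirming is that both lemmas hold uniformly over the \emph{same} set of positive initial data; this is the case, since each is proved for all initial conditions in $Z\setminus X_0$ and $Z\setminus X_1$ respectively, both of which contain every strictly positive $(x_0,y_0)$.
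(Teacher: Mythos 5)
Your proposal is correct and follows essentially the same route as the paper, whose proof simply states that the result follows directly from Lemmas~\ref{persistence of prey} and~\ref{persistence of predator} with $\epsilon=\min\{\epsilon_1,\epsilon_2\}$; you merely fill in the $\liminf$-of-minimum bookkeeping that the paper leaves implicit. Your observation that Lemma~\ref{persistence of predator} is stated with the hypothesis $b<1$ while its proof actually uses $d<1$ (the condition assumed in the theorem) is a genuine and worthwhile correction to flag.
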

	\begin{proof}
		The proof directly follows from the Lemmas \ref{persistence of prey} and \ref{persistence of predator}.
	\end{proof}

			\subsection{ A three species food chain model from \cite{alebraheem2012persistence}}
			Consider the system in \cite{alebraheem2012persistence} where two predators are competing on a prey. Since our focus is to study persistence, we take nondimensional form of the original system. For more details, we refer the reader to \cite{alebraheem2012persistence}.
			\begin{align}
				\label{two predator model}
				\begin{split}
				&	\frac{dx}{dt} = x(1-x) - \frac{\alpha xy}{1+h_1\alpha x} - \frac{\beta xz}{1+h_2\beta x} \\
				&  \frac{dy}{dt} =-uy + \frac{e_1 \alpha xy}{1+h_1\alpha x} - \frac{e_1 \alpha y^2}{1+h_1 \alpha x} -c_1yz \\
				&  \frac{dz}{dt} =-wz + \frac{e_2 \alpha xz}{1+h_2 \beta x} - \frac{e_2 \beta z^2}{1+h_2\beta  x} -c_2yz \\
				\end{split}
			\end{align}
		The above equations in system \eqref{two predator model} are of Kolgomorov type. Following are a few assumptions made originally in \cite{alebraheem2012persistence}:
		\begin{enumerate}[label=\alph*).]
			\item $x$ is a prey population and $y, z$ are competing predators, living exclusively on the prey, i.e. 
			$$
			\frac{\partial L}{\partial y_i}<0, \frac{\partial M_i}{\partial x}>0, M_i(0,y,x) <0, \frac{\partial M_i}{\partial y_j} \le 0, i,j = 1,2
			$$
			where $L= (1-x) - \frac{\alpha y}{1+h_1\alpha x} - \frac{\beta z}{1+h_2\beta x},$ $ M_1 = -u + \frac{e_1 \alpha x}{1+h_1\alpha x} - \frac{e_1 \alpha y}{1+h_1 \alpha x} -c_1z$, and $M_2 = -w + \frac{e_2 \alpha x}{1+h_2 \beta x} - \frac{e_2 \beta z}{1+h_2\beta  x} -c_2y$.
			
			\item In the absence of predators, the prey species x grows to carrying capacity, i.e.
			$$
			L(0,0,0)>0, \frac{\partial L}{\partial x}(x,y,z) =-1<0, \exists k>0 \ni J(k,0,0) =0. \text{ Here } k=1.
			$$
			
			\item There are no equilibrium points on the y or z coordinate axes and no equilibrium point in $yz-$plane.
			
			\item The predator $y$ and the predator $z$ can survive on the prey. This means there exist equilibrium points $\tilde{E}(\tilde{x}, \tilde{y},0)$ and $\hat{E}(\hat{x},0, \hat{z})$ such that $L, M_i$ evaluated at $\tilde{E}$ and $\hat{E}$  are zero such that $\tilde{x}, \tilde{y}, \hat{x}, \hat{z}>0$ and $\hat{x}, \tilde{x} <k.$
		\end{enumerate}
		
		We start by studying the persistence of prey. For this, define the state space to be $Z:= \{ (x,y,z)| x,y,z \ge 0\}$ and $Z_+ :=  \{ (x,y,z)| x,y,z > 0\}$. As similar to above problems, we rewrite model (\ref{two predator model}) in the form (\ref{continuous time}) as
		\begin{align*}
			\begin{split}
				& \frac{dY}{dt} = \mathcal{F}(x, Y) \\
				& \frac{dx}{dt} = \mathcal{A}(x,y,z) x \\
			\end{split}
		\end{align*}
	where 
	$$
	\mathcal{A} = (1-x) - \frac{\alpha y}{1+h_1\alpha x} - \frac{\beta z}{1+h_2\beta x}, Y = 
	\begin{bmatrix}
		y \\
		z
	\end{bmatrix},
	\mathcal{F} = \begin{bmatrix}
		-uy + \frac{e_1 \alpha xy}{1+h_1\alpha x} - \frac{e_1 \alpha y^2}{1+h_1 \alpha x} -c_1yz \\
		-wz + \frac{e_2 \alpha xz}{1+h_2 \beta x} - \frac{e_2 \beta z^2}{1+h_2\beta  x} -c_2yz 
	\end{bmatrix}.
	$$ 
 Define the extinction set to be $X_0^x = \{ (x,y,z) | x=0 \}.$ Then, the dynamics on $X_0^x$ is given by the subsystem
	\begin{align*}
		\begin{split}
			&  \frac{dy}{dt} =-uy +  - e_1 \alpha y^2 -c_1yz \equiv -y( u +   e_1 \alpha y + c_1z ) \\
			&  \frac{dz}{dt} =-wz + - e_2 \beta z^2 -c_2yz \equiv  -z ( w + - e_2 \beta z + c_2y )\\
		\end{split}
	\end{align*}
	It is not difficult to see, by assumption c),  that $\lim_{t \rightarrow \infty} (y,z) = (0,0)$ which implies that $\Omega(X_0^x) = \{ (0,0,0) \} \equiv \{M_1 \}$. The linearized system in $U$ can be written as 
	$$
	\frac{dU}{dt} = U \left[   (1-x) - \frac{\alpha y}{1+h_1\alpha x} - \frac{\beta z}{1+h_2\beta x}  \right]. 
	$$
	Clearly, its fundamental solution is $ P(t, x,y,z) = e^{t[(1-x) - \frac{\alpha y}{1+h_1\alpha x} - \frac{\beta z}{1+h_2\beta x}] }$. The Lyapunov exponent is
	\begin{align*}
		\begin{split}
			\lambda (x,y,z) & =  \limsup_{t \rightarrow \infty } \frac{1}{t} \ln |P(t,z)|  = (1-x) - \frac{\alpha y}{1+h_1\alpha x} - \frac{\beta z}{1+h_2\beta x}\\
		\lambda (M_1)	= & 1 >0.
		\end{split}
	\end{align*}
	Therefore, by a similar argument as in above problems we can choose $\epsilon_x >0$ such that $\liminf_{t \rightarrow \infty } |x(t)| > \epsilon_x, \forall x_0, y_0, z_0 \in Z\backslash X_0^x$. By definition, $x$ is uniformly strongly persistent.
	
	To study the persistence of first predator, that is, $y$, rewrite model (\ref{two predator model}) in the form (\ref{continuous time}) as
	\begin{align*}
		\begin{split}
			& \frac{dY}{dt} = \mathcal{F}(y, Y) \\
			& \frac{dy}{dt} = \mathcal{A}(x,y,z) y \\
		\end{split}
	\end{align*}
	$$
	\mathcal{A} = -u + \frac{e_1 \alpha x}{1+h_1\alpha x} - \frac{e_1 \alpha y}{1+h_1 \alpha x} -c_1z, 
	 Y = 
	\begin{bmatrix}
		x \\
		z
	\end{bmatrix},
	\mathcal{F} = \begin{bmatrix}
		x(1-x) - \frac{\alpha y}{1+h_1\alpha xy} - \frac{\beta xz}{1+h_2\beta x} \\
		-wz + \frac{e_2 \alpha xz}{1+h_2 \beta x} - \frac{e_2 \beta z^2}{1+h_2\beta  x} -c_2yz 
	\end{bmatrix}.
	$$ 
	Define the extinction set to be $X_0^y = \{ (x,y,z) | \epsilon_x/2 \le x \le B_x, y=0 \}.$ Then, the dynamics on $X_0^y$ is given by the subsystem
	\begin{align*}
		\begin{split}
			&\frac{dx}{dt} = x(1-x) - \frac{\beta xz}{1+h_2\beta x} \\
			&  \frac{dz}{dt} =-wz + \frac{e_2 \alpha xz}{1+h_2 \beta x} - \frac{e_2 \beta z^2}{1+h_2\beta  x} \\
		\end{split}
		\end{align*}
		With the assumptions made above ( see \cite{alebraheem2012persistence}), there exist no limit cycles on $xy-$plane. Therefore, $\Omega (X_0^y) = \{ (\hat{x}, 0, \hat{z}) \} \equiv \{ M_2\}$. Again, the linearized system in $U$ can be written as		
		$$
		\frac{dU}{dt} = U \left[  -u + \frac{e_1 \alpha x}{1+h_1\alpha x} -c_1z   \right]. 
		$$
		and its  fundamental solution is $ P(t, x,y,z) = e^{t[  -u + \frac{e_1 \alpha x}{1+h_1\alpha x} -c_1z   ] }$. Therefore, the positivity of the Lyapunov exponent, $	\lambda (M_2)	= -u + \frac{e_1 \alpha \hat{x}}{1+h_1\alpha \hat{x}} -c_1 \hat{z}$, implies that $ e_1 > \frac{ ( u+c_1\hat{z}  ) ( 1+h_1\alpha \hat{x} ) }{\alpha \hat{x}}.$ 	Therefore, by a similar argument as in above problems we can choose $\epsilon_y >0$ such that $\liminf_{t \rightarrow \infty } |y(t)| > \epsilon_x, \forall x_0, y_0, z_0 \in Z\backslash X_0^x$. By definition, $y$ is uniformly strongly persistent.
		
		By a similar analysis, one can show the persistence of second predator, that is, that there exists $\epsilon_z>0$ and hence that $z$ is uniformly strongly persistent when $ e_2> \frac{ ( w+c_2 \tilde{y}  ) ( 1+h_2\beta \tilde{x} ) }{ \beta \tilde{x}}.$ 
		
		To summarize the all above for system (\ref{two predator model}), we have the following theorem:
		
		\begin{theorem}
			Assume $ e_1 > \frac{ ( u+c_1\hat{z}  ) ( 1+h_1\alpha \hat{x} ) }{\alpha \hat{x}}$ and $ e_2> \frac{ ( w+c_2 \tilde{y}  ) ( 1+h_2\beta \tilde{x} ) }{ \beta \tilde{x}}$. Then, the system (\ref{two predator model}) is uniformly persistent, that is, there exists $\epsilon >0$ such that $\liminf_{ t \rightarrow \infty }\{ x(t), y(t), z(t)\} > \epsilon$ for all $x_0, y_0, z_0 > 0$.
			\end{theorem}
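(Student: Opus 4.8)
The plan is to deduce the theorem as the synthesis of three separate uniform-persistence statements, one for the prey $x$ and one for each predator $y$ and $z$, and then to set $\epsilon = \min\{\epsilon_x, \epsilon_y, \epsilon_z\}$. The persistence of $x$ and $y$ has already been established in the discussion preceding the statement, and the persistence of $z$ was sketched there; so the work is to complete the $z$-argument in full parallel with the $y$-argument and to observe that the three $\liminf$ bounds, holding simultaneously along every positive orbit, force $\liminf_{t\to\infty}\min\{x(t),y(t),z(t)\} > \epsilon$.

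For the $z$-persistence I would follow the roadmap of Section 2. First I would recast the system in the transverse form $\frac{dz}{dt} = \mathcal{A}(x,y,z)\,z$, with $\mathcal{A} = -w + \frac{e_2\beta x}{1+h_2\beta x} - \frac{e_2\beta z}{1+h_2\beta x} - c_2 y$ (reading the $z$-conversion coefficient as $\beta$, consistent with the prey functional response and with the stated condition on $e_2$), and with the complementary $(x,y)$-block playing the role of $\mathcal{F}$. The extinction set is $X_0^z = \{(x,y,z): z = 0\}$, restricted as before to the bounded region where $x$ and $y$ are already persistent. On $X_0^z$ the flow reduces to the planar predator--prey subsystem $\frac{dx}{dt} = x(1-x) - \frac{\alpha xy}{1+h_1\alpha x}$, $\frac{dy}{dt} = -uy + \frac{e_1\alpha xy}{1+h_1\alpha x} - \frac{e_1\alpha y^2}{1+h_1\alpha x}$, whose unique interior equilibrium is $\tilde E = (\tilde x, \tilde y)$ by assumption d).

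The crux is then to show that $\Omega(X_0^z) = \{(\tilde x, \tilde y, 0)\} =: \{M_3\}$, i.e. that every orbit of the planar boundary subsystem converges to $\tilde E$. This is the main obstacle: to reduce the omega-limit set to a single equilibrium I must exclude periodic orbits in the $xy$-plane, which I would do via the Poincar\'e--Bendixson theorem together with a Dulac/Bendixson negative-divergence criterion, matching the no-limit-cycle hypothesis imported from \cite{alebraheem2012persistence}. Granting this, the transverse linearization $\frac{dU}{dt} = U\big[-w + \frac{e_2\beta x}{1+h_2\beta x} - c_2 y\big]$ has the scalar fundamental solution $P(t,z) = \exp\!\big(\int_0^t \mathcal{A}(\Phi(s,z))\,ds\big)$, so at the constant orbit $M_3$ the integrand is constant and the Lyapunov exponent collapses to $\lambda(M_3) = \mathcal{A}(M_3) = -w + \frac{e_2\beta\tilde x}{1+h_2\beta\tilde x} - c_2\tilde y$. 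Its positivity is exactly the hypothesis $e_2 > \frac{(w+c_2\tilde y)(1+h_2\beta\tilde x)}{\beta\tilde x}$.

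With $\lambda(M_3) > 0$, Proposition 1 of \cite{smith2011dynamical} makes $M_3$ a uniform weak repeller relative to the interior dynamics, and Theorem 2.3 of the same reference then furnishes $\epsilon_z > 0$ with $\liminf_{t\to\infty}|z(t)| > \epsilon_z$ for all positive initial data; the symmetric argument under $e_1 > \frac{(u+c_1\hat z)(1+h_1\alpha\hat x)}{\alpha\hat x}$ gives $\epsilon_y$, and the prey computation (Lyapunov exponent $1$ at the origin) gives $\epsilon_x$. Taking $\epsilon = \min\{\epsilon_x, \epsilon_y, \epsilon_z\}$ finishes the proof. Besides excluding boundary cycles, the point I would check most carefully is point-dissipativity of the full three-dimensional flow, so that the persistence machinery applies on a compact absorbing set; this follows from the logistic structure of the $x$-equation and the boundedness of the saturating (Holling type II) functional responses.
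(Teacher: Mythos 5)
Your proposal matches the paper's argument: the paper likewise reduces the theorem to the three separate persistence statements for $x$, $y$ and $z$ obtained via the extinction-set / Lyapunov-exponent machinery in the preceding discussion (leaving the $z$-case as ``a similar analysis'') and concludes simply by taking $\epsilon = \min\{\epsilon_x,\epsilon_y,\epsilon_z\}$. Your write-up is in fact somewhat more careful than the paper's, since you spell out the $z$-case explicitly, correctly read the $e_2\alpha x$ numerator in the $z$-equation as $e_2\beta x$ so that the Lyapunov exponent matches the stated hypothesis on $e_2$, and flag the need to verify point-dissipativity and the absence of boundary limit cycles, which the paper only asserts.
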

			\begin{proof}
				In the above analysis, finally choose $ \epsilon = \min . \{ \epsilon_x, \epsilon_y, \epsilon_z\}$ and the theorem holds.
			\end{proof}

		\section{Discussion}
		We referred to many examples from the literature and discussed the persistence of each system. Some of them were solved using the spectral radius approach, while a few were solved using the Lyapunov exponent. Although this work is directly inspired by \cite{salceanu2009lyapunov} and \cite{smith2011dynamical}, the author believes that this note provides useful examples in order to understand the process by which a given dynamical system, generated by ordinary differential equations or by maps, is shown to persist.
		
		\textbf{Acknowledgment:} The author would like to thank Dr. Paul Salceanu, Associate Professor in the Department of Mathematics at the University of Louisiana at Lafayette (ULL), whose lectures inspired the concept of this note. Special thanks are also due to Dr. Azmy S. Ackleh, my current PhD supervisor in the Department of Mathematics at ULL, who originally developed the model in \cite{ackleh2023discrete}, in joint work with Dr. Amy Veprauskas, also from the Department of Mathematics at ULL. The author further thanks Mr. Neerob Basak, a graduate student in the Department of Mathematics at ULL, for his contribution to completing Theorem \ref{theorem Q din} and for his constructive feedback. The author acknowledges the use of AI tools for grammar correction.

	\bibliographystyle{apalike} 
	\bibliography{Persistence-theory}

\end{document}